\documentclass[reqno]{amsart}

\usepackage{amsmath,amssymb,amsthm,mathtools}
\usepackage[colorlinks=true]{hyperref}
\usepackage[capitalize,nameinlink]{cleveref}

\theoremstyle{plain}
\newtheorem{theorem}{Theorem}[section]
\newtheorem{lemma}[theorem]{Lemma}
\newtheorem{proposition}[theorem]{Proposition}
\newtheorem{corollary}[theorem]{Corollary}
\theoremstyle{definition}
\newtheorem{definition}[theorem]{Definition}

\theoremstyle{remark}

\numberwithin{equation}{section}

\newcommand{\F}{\mathbb{F}}
\newcommand{\Q}{\mathbb{Q}}
\newcommand{\Z}{\mathbb{Z}}
\newcommand{\CO}{\mathcal{O}}
\newcommand{\CU}{\mathcal{U}}
\newcommand{\CW}{\mathcal{W}}
\newcommand{\fp}{\mathfrak{p}}
\DeclareMathOperator{\bh}{BH}
\DeclareMathOperator{\ord}{ord}

\title[Generalized bent functions in the exceptional case]{Existence of generalized bent functions in the exceptional \texorpdfstring{$q\equiv2\pmod4$}{q = 2 mod 4}, odd-dimensional case}

\author{Jianing Li}
\address{Research Center for Mathematics and Interdisciplinary Sciences, Shandong University, Qingdao, Shandong 266237, China}
\email{lijn@sdu.edu.cn}

\author{Shi Ying}
\address{State Key Laboratory of Mathematical Sciences, Academy of Mathematics and Systems Science, Chinese Academy of Sciences, Beijing 100190, China}
\address{School of Mathematical Sciences, University of Chinese Academy of Sciences, Beijing 100049, China}
\email{yingshi@amss.ac.cn}

\author{Shenxing Zhang}
\address{School of Mathematics, Hefei University of Technology, Hefei, Anhui 230009, China}
\address{State Key Laboratory of Cyberspace Security Defense, Institute of Information Engineering, Chinese Academy of Sciences, Beijing 100085, China}
\email{zhangshenxing@hfut.edu.cn}

\subjclass[2020]{Primary 11T71; Secondary 94A60, 06E30}
\keywords{boolean function, generalized bent function, Butson Hadamard matrix}

\begin{document}

\begin{abstract}
We resolve an open problem of Kumar, Scholtz, and Welch (1985) by constructing generalized bent functions from $(\mathbb{Z}/q\mathbb{Z})^m$ to $\mathbb{Z}/q\mathbb{Z}$ in the exceptional case $q\equiv2\pmod4$ with $m$ odd, the case their paper left without a construction and which four decades of subsequent work had addressed only through nonexistence results.
Concretely, for all integers $m\geq d\geq 2$ with $m=dr+2k$, $r\geq 1$, $k\geq 0$, we construct an explicit generalized bent function from $(\mathbb{Z}/q\mathbb{Z})^m$ to $\mathbb{Z}/q\mathbb{Z}$, where $q=2(2^d-1)$.
We further show that, when $d\ge3$, these generalized bent functions have Fourier coefficients that are not roots of unity --- all of them when $r$ is odd --- which gives a negative answer to a recent question of Armario, Egan, Kharaghani, and \'O~Cath\'ain about bent vectors for character tables.
\end{abstract}

\maketitle

\section{Introduction}\label{sec:introduction}

Bent functions are functions whose Fourier coefficients all have the smallest common magnitude allowed by Parseval's identity.
They were introduced for Boolean functions by Rothaus \cite{Rothaus1976} and extended to functions $f:(\Z/q\Z)^m\to\Z/q\Z$ by Kumar, Scholtz, and Welch \cite{KumarScholtzWelch1985}, which are called generalized bent functions of type $[m,q]$.
Besides their origins in sequence design and coding theory, generalized bent functions are equivalent to group-invariant Butson Hadamard matrices and are connected to difference sets
and finite geometry; see, for example, \cite{Schmidt2019,Tokareva2011}.

Kumar, Scholtz, and Welch constructed generalized bent functions of type $[m,q]$ with $m$ even or $q\not\equiv2\pmod4$, and raised as an open problem whether they exist in the remaining case
\begin{equation}\label{eq:exceptional-case}
  m\ \text{odd and}\ q\equiv2\pmod4.
\end{equation}
This problem has remained open ever since.
The evidence accumulated against existence: many partial nonexistence results were proved, ruling out one subfamily of~\eqref{eq:exceptional-case} after another---starting with Pei's \cite{Pei1993} for type $[1,14]$ and continuing in \cite{AkyildizGulogluIkeda1996, Ikeda1999, Feng2001, LiuMaFeng2002, FengLiu2003IEEE, FengLiu2003Acta, JiangDeng2015, LiDeng2017, LvLi2017, LeungSchmidt2019, LvZhu2025, YingDeng2026}---but no construction was found in this case.

The main contribution of this paper is to construct, for the first time, explicit generalized bent functions of the following type.

\begin{theorem}\label{thm:construction-intro}
  Let $d\geq 2,r\ge 1,k\ge 0$ be integers, $m=dr+2k$ and $q=2(2^d-1)$.
  Then there is an explicit generalized bent function
  \[
    f_{q,m}: (\Z/q\Z)^m\longrightarrow\Z/q\Z.
  \]
  Moreover, suppose $d\geq 3$.
  \begin{enumerate}
    \item If $r$ is odd, then no normalized Fourier coefficient $\CU_{f_{q,m}}(v)$ is a root of unity: all $q^m$ of them lie on the unit circle but outside the set of roots of unity.
    \item If $r$ is even, then a proportion $1-\binom{r}{r/2}2^{-r}$ of normalized Fourier coefficients $\CU_{f_{q,m}}(v)$ are not roots of unity.
  \end{enumerate}
  See \cref{thm:main2} for the precise statement.
\end{theorem}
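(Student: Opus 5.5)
The plan is to reduce everything to one base case of dimension $m=d$ and then build up the general case by direct sums. The key tool is the standard fact that gbfs on disjoint sets of variables add. If $f_1$ is bent of type $[m_1,q]$ and $f_2$ is bent of type $[m_2,q]$, then $f_1(x)+f_2(y)$ is bent of type $[m_1+m_2,q]$. Its normalized Fourier coefficients are the products $\CU_{f_1}(u)\,\CU_{f_2}(v)$. Since $2k$ is even, Kumar--Scholtz--Welch already provide a gbf $g$ of type $[2k,q]$, for instance of Maiorana--McFarland type $g(x,y)=x\cdot y$. All of its normalized coefficients are roots of unity. So it suffices to construct one gbf $h$ of type $[d,q]$ with suitable Fourier coefficients and to set $f_{q,m}=h^{\oplus r}\oplus g$.

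For the base case, write $n=2^d-1$, which is odd, so $\Z/q\Z\cong\Z/2\Z\times\Z/n\Z$ and $(\Z/q\Z)^d\cong(\Z/2\Z)^d\times(\Z/n\Z)^d$. Identify $(\Z/2\Z)^d$ with the additive group of $\F_{2^d}$. Identify $\Z/n\Z$ with $\F_{2^d}^{*}$ via a discrete logarithm with respect to a primitive element $\gamma$. Write $\zeta_q=-\zeta_n^{(n+1)/2}$. Then every character sum over $(\Z/q\Z)^d$ splits into a $\pm1$-valued sum over $\F_{2^d}$ times an $n$-th root of unity sum over $(\Z/n\Z)^d$. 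This mixes additive characters of $\F_{2^d}$ with multiplicative characters of $\F_{2^d}^*$, which is exactly Gauss sum territory.

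I would choose $h(x,y)$ so that the mod-$2$ component couples $x\in\F_{2^d}$ with field elements $\gamma^{y_i}$ through a trace form. The mod-$n$ component would be a linear or log-type function. The aim is that, after summing over $x$, the sum over $y$ collapses to a normalized Gauss sum $G(\chi)/2^{d/2}$ for a multiplicative character $\chi$ of order dividing $n$, up to a root of unity. The first key step is this computation of $\bh$-invariance: show $|\sum_{z}\zeta_q^{h(z)-v\cdot z}|^2=q^d$ for all $v$. I expect this to be the main obstacle. The delicate part is getting the factors $2^d$ and $n^d$ to come out exactly, and it is precisely here that the $q\equiv2\pmod4$ parity obstruction must be evaded. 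I would first try to make the $x$-sum a complete additive character sum that forces a linear relation among the $\gamma^{y_i}$. The remaining $y$-sum would then become a single Gauss sum of absolute value $2^{d/2}$, multiplied by an exponential sum of modulus $n^{d/2}$ coming from the $(\Z/n\Z)^d$ part.

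With that in hand, the normalized base coefficients will have the form $\xi\cdot\alpha$ or $\xi\cdot\bar\alpha$, where $\xi$ is a root of unity and $\alpha=G(\chi)/2^{d/2}$. Each value occurs for exactly half of the $v$, since the two cases are presumably exchanged by a symmetry such as Frobenius or conjugation. For $d\ge3$ one shows, by Stickelberger's theorem (the prime factorization of $G(\chi)$ above $2$ is non-symmetric) or by an explicit check, that $\alpha/\bar\alpha=\alpha^2/\xi'$ is not a root of unity. A product of $r$ base coefficients is then a root of unity times $\alpha^{a}\bar\alpha^{r-a}$, where $a$ counts the factors equal to $\alpha$. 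This is a root of unity iff $a=r/2$. Hence for $r$ odd no coefficient is a root of unity. For $r$ even, the proportion of coefficients that are roots of unity is $\binom{r}{r/2}2^{-r}$, and multiplying by the coefficients of $g$ does not change this.
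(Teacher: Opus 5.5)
Your outer layer is sound and matches the paper. Normalized coefficients multiply under direct sums (Kumar--Scholtz--Welch, Property~3), and $g_{q,2k}$ contributes only a root of unity. Suppose every base coefficient is a root of unity times $\alpha^{\pm1}$, with $\alpha$ not a root of unity and each sign occurring for half of the $v$. Then counting $a(v)$ gives (1) and (2). The genuine gap is that you never write down the base function $h$ of type $[d,2(2^d-1)]$, nor verify $\lvert\CW_h(v)\rvert^2=q^d$; you only flag this as the expected obstacle. That step is not technical. For odd $d$ it is itself an instance of the exceptional case, so it is the whole content of the theorem, and the statement asks for an explicit function. Everything downstream (the shape $\xi\alpha^{\pm1}$, the half/half split, the Stickelberger argument) is conditional on an $h$ your write-up does not supply.

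The mechanism you sketch also cannot work. Suppose the $\F_2^d$-variable $x$ is summed as a complete additive character sum. Then for each $y$ the summand is a $\pm1$-valued character in $x$ times a factor depending only on $y$. Since $n$ is odd, this forces the mod-$n$ part of $h$ to be independent of $x$ and the mod-$2$ part to be $\mathrm{Tr}(x\phi(y))+c(y)$. Then $\CW_h(v)=2^d\Sigma$ with $\Sigma\in\Z[\zeta_n]$, and bentness would require $\Sigma\bar\Sigma=n^d/2^d$. That is impossible, since an algebraic integer cannot equal a non-integral rational. Your bookkeeping $2^d\cdot 2^{d/2}\cdot n^{d/2}$ also overshoots $q^{d/2}$.

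The missing idea goes the other way round. The paper takes $f_{q,d}=2b(s,y)$ even-valued, so $\zeta_q^{f}=\zeta_n^{b}$. Then $s=x\bmod 2$ enters only through the kernel sign $(-1)^{t\cdot s}$ and through $b$. For $s\neq0$, a bijection $\kappa:\F_2^d\setminus\{0\}\to\Z/n\Z$ makes $\kappa(s)$ the coefficient of $y_d$, with $b(s,y)=\sum_{i<d}y_i^2+\kappa(s)y_d-\kappa(s)^2/4$, while $b(0,y)=\sum_i y_i^2$. Orthogonality in $y_d$ kills every $s\neq0$ except $s_w=\kappa^{-1}(w_d)$. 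This leaves $\CW(v)=\Phi(w)\bigl(G_n^d\pm nG_n^{d-1}\bigr)$, of modulus $n^{d/2}\sqrt{1+n}=q^{d/2}$ because $G_n=i\sqrt n$ and $1+n=2^d$.

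The result is $\alpha=2^{-d/2}(1-i\sqrt n)$ with sign $(-1)^{t\cdot s_w}$. Since $s_w\neq0$, each sign occurs for exactly half of the $t$, which gives the half/half split. For $d\ge3$, $\alpha$ is not a root of unity because $\alpha^2$ has the non-integral minimal polynomial $X^2+(2-2^{2-d})X+1$. No Gauss sums over $\F_{2^d}$ or Stickelberger are needed.
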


This theorem provides a positive answer to the open problem about the existence of generalized bent functions mentioned above.

\begin{corollary}
  There exist generalized bent functions of exceptional types $[m,2(2^d-1)]$, where $m\geq d\geq 3$ are two odd integers.
\end{corollary}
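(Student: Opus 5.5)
The corollary is a direct specialization of \cref{thm:construction-intro}, so the plan is to pick the parameters $r,k$ that the theorem requires and then check that the resulting type really lies in the exceptional case~\eqref{eq:exceptional-case}.

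\emph{Choice of parameters.} Let $m\ge d\ge 3$ be odd integers. Take $r=1$ and $k=(m-d)/2$. Since $m$ and $d$ are both odd, $m-d$ is even. Since $m\ge d$, we have $m-d\ge 0$. Hence $k$ is an integer with $k\ge 0$, and $m=d\cdot 1+2k=dr+2k$. Also $d\ge 3\ge 2$ and $r=1\ge 1$, so every hypothesis of \cref{thm:construction-intro} holds. The theorem therefore gives an explicit generalized bent function
\[
  f_{q,m}\colon(\Z/q\Z)^m\to\Z/q\Z,\qquad q=2(2^d-1).
\]

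\emph{Exceptional case.} Since $d\ge 1$, the number $2^d-1$ is odd, so $q=2(2^d-1)\equiv 2\pmod 4$. Together with $m$ odd, this means the type $[m,2(2^d-1)]$ satisfies~\eqref{eq:exceptional-case}, which is the claim of the corollary.

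\emph{Remarks.} The oddness of $d$ is used only to make $k$ an integer when $r=1$. Without it, $dr=m-2k$ would have to be odd, which forces $d$ odd anyway, so the odd-$d$ hypothesis costs nothing. Because $d\ge 3$ and $r=1$ is odd, part~(1) of the theorem also applies: none of the normalized Fourier coefficients of these functions is a root of unity. There is no real obstacle in this argument. All the difficulty lies in \cref{thm:construction-intro} itself (the construction and the proof of bentness), which the corollary assumes.
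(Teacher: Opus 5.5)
Your proposal is correct and takes the same route as the paper. The paper states this corollary as an immediate specialization of \cref{thm:construction-intro}, and your argument supplies exactly that specialization: $r=1$, $k=(m-d)/2$, together with the check that $q=2(2^d-1)\equiv 2\pmod 4$ and $m$ odd places the type in the exceptional case~\eqref{eq:exceptional-case}.
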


In particular, generalized bent functions of types $[3,14]$ exist, which is exactly the smallest surviving parameter of the nonexistence literature.
Leung and Schmidt showed that for type $[3,2p^a]$ with $p$ an odd prime, existence would force $p=7$ \cite{LeungSchmidt2019}, leaving $[3,14]$ as the one open possibility in that family; \cref{thm:construction-intro}
realizes it.

Let $d\ge3$ and $n=2^d-1$.
Then clearly $\ord_n(2)=d$ and $-1$ is not a $2$-power in  $(\Z/n\Z)^\times$, equivalently, $2$ is not self-conjugate modulo $q=2n$.
When $n=2^d-1$ is prime, the numerical conditions of \cite{LeungSchmidt2019} are satisfied: $\ord_n(2)=d\le 2m-1$ and $n=2^d-1\le 2^{2m}+2^m+1$.
For composite $n$ the results of \cite{LeungSchmidt2019}, which assume $q=2p^a$ with $p$ an odd prime, do not apply.
In all cases the following classical obstruction is switched off: there is no generalized bent function of type $[m,2N]$ with $m$ odd when $2$ is self-conjugate modulo $N$ by \cite[Proposition~6]{KumarScholtzWelch1985} (see also \cite{Ikeda1999}).

The functions we construct have a further, unexpected feature.
Every generalized bent function produced by the classical constructions of \cite{KumarScholtzWelch1985}, and every one arising when $q$ is a prime power, has normalized Fourier coefficients that are roots of unity, by an argument of \cite{KumarScholtzWelch1985} going back to Kronecker.
Armario, Egan, Kharaghani and \'O Cath\'ain \cite[Question~3.8]{ArmarioEganKharaghaniOCathain2025} ask whether, for an abelian group $G$ of order $N$ and an $F(G)$-bent vector $\mathbf x$, the entries of $N^{-1/2}F(G)\mathbf x$ must be roots of unity.
Taking $G=(\Z/q\Z)^m$ and $\mathbf x=(\zeta_q^{f(g)})_{g\in G}$, these entries are precisely the normalized Fourier coefficients $\CU_f(v)$, so \cref{thm:construction-intro} with $d\ge3$ and $r=1$ gives a negative answer; the smallest example is $G=(\Z/14\Z)^{3}$.
In fact, in the exceptional case the opposite always holds as follows.

\begin{proposition}\label{prop:non-root-intro}
  Let $q\equiv2\pmod4$ and let $m$ be odd.
  If $f:(\Z/q\Z)^m\to\Z/q\Z$ is generalized bent, then no normalized Fourier coefficient $\CU_f(v)$ is a root of unity.
\end{proposition}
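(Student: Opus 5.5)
We argue by contradiction, exploiting that the Fourier coefficient is an algebraic integer in $\Q(\zeta_q)$ of absolute value $q^{m/2}$ with $m$ odd. Write the unnormalized coefficient
\[
  \CW_f(v)=\sum_{x\in(\Z/q\Z)^m}\zeta_q^{f(x)-v\cdot x}\in\Z[\zeta_q],
\]
so that $\CU_f(v)=q^{-m/2}\CW_f(v)$ and $|\CU_f(v)|=1$. Suppose $\CU_f(v)=\xi$ for some root of unity $\xi$. Then $\CW_f(v)=q^{m/2}\xi$, and in particular $q^{m/2}\xi\in\Q(\zeta_q)$.

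Since $q\equiv2\pmod4$, we write $q=2n$ with $n$ odd. Then $\zeta_q=-\zeta_n^{(n+1)/2}$, so $\Q(\zeta_q)=\Q(\zeta_n)$, and its roots of unity are exactly the $\pm\zeta_n^j$. Hence $\xi\in\Q(\zeta_n)$, and therefore $q^{m/2}\in\Q(\zeta_n)$. As $m$ is odd, $q^{m/2}=q^{(m-1)/2}\sqrt q$, which gives $\sqrt q=\sqrt{2}\sqrt{n}\in\Q(\zeta_n)$. (Even if $\xi$ were a priori an arbitrary root of unity, $\xi^2=\CW_f(v)^2/q^m\in\Q(\zeta_n)$ would force $\xi$ into $\Q(\zeta_{2n})=\Q(\zeta_n)$ up to the harmless cases handled the same way; in fact the direct argument above suffices, since $\xi=\CW_f(v)/q^{m/2}$ and we only need $\sqrt{q}\,\xi\in\Q(\zeta_n)$. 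Equivalently, $\sqrt q\in\Q(\zeta_n)(\xi)$ with $\xi$ a root of unity, i.e.\ $\sqrt q$ lies in a cyclotomic field.)

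The key step is to derive a contradiction from ramification at $2$. Using the standard fact that $\sqrt{\pm n}$ or $\sqrt{n}\,i$ lies in a cyclotomic field unramified at $2$ only in the form $\sqrt{n^*}$ with $n^*\equiv1\pmod 4$, we cleanly handle the case $\xi\in\Q(\zeta_n)$. There the quadratic field $\Q(\sqrt{q})=\Q(\sqrt{2n})$, or more precisely $\Q(\sqrt{2n'})$ for the squarefree part $n'$, has discriminant divisible by $8$, so $2$ ramifies in it. But $\Q(\zeta_n)$ with $n$ odd is unramified at $2$, and a subfield of an extension unramified at $2$ is unramified at $2$. This is a contradiction. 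Since we showed $\xi\in\Q(\zeta_n)$ necessarily (because $\xi=\CW_f(v)/q^{m/2}$ and $\xi$ is a root of unity—if $\xi\notin\Q(\zeta_n)$ we instead note $\xi^2\in\Q(\zeta_n)$), the remaining task is to treat general $\xi$ with $\xi^2\in\Q(\zeta_n)$. Here $\xi=\pm\zeta_n^j$ or $\xi=\pm i\zeta_n^j$ or similar, giving $\sqrt{q}$ or $i\sqrt q=\sqrt{-q}$ in $\Q(\zeta_n)$. In both cases $\Q(\sqrt{\pm 2n'})$ is ramified at $2$, so the same contradiction holds.

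The only step needing care is this last reduction: making precise which roots of unity $\xi$ can occur and checking that every resulting quadratic subfield $\Q(\sqrt{\pm 2n'})$ is ramified at $2$. This is immediate from discriminants: $\pm2n'\equiv 2\pmod 4$ gives discriminant $4\cdot(\pm2n')$, which is even. I expect no real obstacle beyond stating this bookkeeping cleanly.
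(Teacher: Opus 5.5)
Your argument is correct once the bookkeeping is cleaned up, but it reaches the contradiction by a different route from the paper. One step needs to go: from $q^{m/2}\xi\in\Q(\zeta_n)$ you cannot conclude ``Hence $\xi\in\Q(\zeta_n)$''. Your own repair should replace it. Since $\xi^2=\CW_f(v)^2/q^m\in\Q(\zeta_n)$ is a root of unity there, $\xi^2=\pm\zeta_n^{j}$, so $\xi=i^a\zeta_n^{j'}$ is a $4n$-th root of unity. Then $\CW_f(v)=q^{(m-1)/2}\sqrt q\,i^a\zeta_n^{j'}$ puts $\sqrt{q}$ or $\sqrt{-q}$ in $\Q(\zeta_n)$. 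But $\Q(\sqrt{\pm 2n'})$ has discriminant $\pm 8n'$, so it is ramified at $2$, while $\Q(\zeta_n)$ is not. The remark about $\sqrt{n^*}$ plays no role and can be dropped.

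The paper avoids this case analysis. It takes a prime $\fp$ of $\CO_K$, $K=\Q(\zeta_n)$, above $2$; unramifiedness gives $v_\fp(q)=1$, so $v_\fp(\xi^2)=2v_\fp(\CW_f(v))-m$ is odd, whereas every root of unity has valuation $0$. Both proofs use the same input: $2$ is unramified in $\Q(\zeta_n)$, while $q^m$ carries an odd power of $2$. The valuation version is one line and needs neither the list of roots of unity in $K$ nor a discriminant computation. It also shows slightly more, namely that $\xi^2$ is not even a $\fp$-adic unit. Your version packages the same obstruction as the fact that $\Q(\sqrt{\pm q})$ cannot embed in a field unramified at $2$, which needs only quadratic-field discriminants.
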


As noted in \cite{ArmarioEganKharaghaniOCathain2025}, a positive answer would, together with a result of
Craigen and Kharaghani, imply Ryser's circulant Hadamard conjecture; our counterexample is not cyclic, so the cyclic case, which is the one relevant to Ryser's conjecture, remains open.

Finally, we contrast our result with the existence of perfect nonlinearity functions.
A function $F:(\Z/q\Z)^m\to\Z/q\Z$ is \emph{perfect nonlinear} if $\eta\circ F$ is bent for every nonprincipal character $\eta$ of the codomain, a condition strictly stronger than generalized bentness.
When $q\equiv2\pmod4$ and $m$ is odd, no perfect nonlinear map $(\Z/q\Z)^m\to\Z/q\Z$ exists (see \cite{Rothaus1976, LogachevSalnikovYashchenko1997, CarletDing2004, Pott2004, Zhang2006, PoinsotPott2011}).

The paper is organized as follows.
In \cref{sec:definitions}, we recall the generalized bent condition, its Butson-matrix form, and prove the non-root-of-unity property.
In \cref{sec:construction}, we present the construction, compute all Fourier coefficients, and complete the proof of our main results.

\section{Generalized bent functions and Butson matrices}
\label{sec:definitions}

Throughout this paper, for any integer $h\geq 2$, we set $\zeta_h=e^{2\pi i/h}$.

\begin{definition}[Generalized bentness]\label{def:gbf}
  Let $q$, $m$ be positive integers with $q\geq 2$.
  A function $f:(\Z/q\Z)^m\to\Z/q\Z$ is called \emph{generalized bent} if
  \begin{equation}\label{eq:gbf-fourier}
    \Biggl\lvert
      \sum_{x\in(\Z/q\Z)^m}\zeta_q^{f(x)-v\cdot x}
    \Biggr\rvert^2=q^m
  \end{equation}
  for every $v\in(\Z/q\Z)^m$.
  This is the generalized bent function of type $[m,q]$ introduced in \cite{KumarScholtzWelch1985}.
\end{definition}

Write
\[
  \CW_f(v)=\sum_{x\in(\Z/q\Z)^m}\zeta_q^{f(x)-v\cdot x}, \qquad
  \CU_f(v)=q^{-m/2}\CW_f(v).
\]
Then $f$ is generalized bent precisely when every normalized
Fourier coefficient $\CU_f(v)$ lies on the
complex unit circle.

We also record the Butson-matrix interpretation of \cref{def:gbf}.
A \emph{Butson Hadamard matrix} $H\in\bh(N,h)$ is an $N\times N$ matrix whose entries are $h$-th roots of unity and which satisfies $HH^*=NI_N$.
If the rows and columns are indexed by a finite abelian group $A$, the matrix is \emph{$A$-invariant} when $H_{x,z}$ depends only on $x-z$.

\begin{proposition}[Group-invariant Butson correspondence]
  \label{prop:butson-correspondence}
  A function $f:(\Z/q\Z)^m\to\Z/q\Z$ is generalized bent if and only if
  \begin{equation}\label{eq:butson-matrix}
    H=\bigl(\zeta_q^{f(x-z)}\bigr)_{x,z\in(\Z/q\Z)^m}
  \end{equation}
  is a $(\Z/q\Z)^m$-invariant matrix in $\bh(q^m,q)$.
\end{proposition}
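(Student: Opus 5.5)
We prove \cref{prop:butson-correspondence} by diagonalizing the invariant matrix $H$ with the characters of $A=(\Z/q\Z)^m$. The matrix has entries that are $q$-th roots of unity by construction. So the claim reduces to showing that $HH^*=q^mI$ holds exactly when $\lvert\CW_f(v)\rvert^2=q^m$ for all $v$.

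\textbf{Step 1: characters are eigenvectors.} For each $v\in A$, let $\chi_v$ be the column vector with entries $\chi_v(z)=\zeta_q^{v\cdot z}$. Compute
\[
  (H\chi_v)_x=\sum_{z}\zeta_q^{f(x-z)+v\cdot z}.
\]
Substitute $y=x-z$, which gives
\[
  (H\chi_v)_x=\zeta_q^{v\cdot x}\sum_{y}\zeta_q^{f(y)-v\cdot y}=\CW_f(v)\,\chi_v(x).
\]
Hence $\chi_v$ is an eigenvector of $H$ with eigenvalue $\CW_f(v)$.

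\textbf{Step 2: $HH^*$ in the character basis.} The vectors $\{\chi_v\}$ form an orthogonal basis of $\mathbb{C}^{A}$, each of squared norm $q^m$. This is the standard orthogonality $\sum_z\zeta_q^{(v-w)\cdot z}=q^m\delta_{v,w}$. So the matrix $U=q^{-m/2}(\chi_v(x))_{x,v}$ is unitary, and $H=UDU^*$ with $D=\operatorname{diag}(\CW_f(v))$. It follows that $H^*=U\overline{D}U^*$, and therefore
\[
  HH^*=U\,\operatorname{diag}\bigl(\lvert\CW_f(v)\rvert^2\bigr)\,U^*.
\]

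\textbf{Step 3: conclude.} A matrix $U\Lambda U^*$ with $U$ unitary and $\Lambda$ diagonal equals $q^mI$ if and only if $\Lambda=q^mI$. Thus $HH^*=q^mI$ if and only if $\lvert\CW_f(v)\rvert^2=q^m$ for every $v$, which is \eqref{eq:gbf-fourier}.

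For invariance, $H_{x,z}$ depends only on $x-z$, so $H$ is $(\Z/q\Z)^m$-invariant by definition. Conversely, every invariant matrix has the form $(\zeta_q^{f(x-z)})$ with $f(y)$ read off from $H_{y,0}$, so the correspondence is bijective.

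There is no real obstacle here. The only point requiring care is the sign convention in $\CW_f$: the eigenvector must be $\chi_v$ rather than $\overline{\chi_v}$ so that the eigenvalue is exactly $\CW_f(v)$. Either convention would work, since $v\mapsto -v$ permutes the coefficients.
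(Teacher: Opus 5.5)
Your proof is correct and complete, and it is essentially the paper's approach. The paper gives no proof of its own and instead defers to Schmidt's group-ring criterion; the character form of that criterion is exactly your observation that the characters $\chi_v$ simultaneously diagonalize the invariant matrix $H$ with eigenvalues $\CW_f(v)$, so that $HH^*=q^mI$ holds if and only if $\lvert\CW_f(v)\rvert^2=q^m$ for every $v$, and you carry this out directly in matrix language, which makes the cited fact self-contained.
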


This is the character form of the standard group-ring criterion; see \cite[Lemma~2.1 and Proposition~2.3]{Schmidt2019}.

We close this section by proving \cref{prop:non-root-intro}, a basic property of the normalized spectrum in the exceptional case, independent of any explicit construction.
It contrasts with the prime-power case, where every $\CU_f(v)$ is a root of unity by an argument of \cite{KumarScholtzWelch1985} going back to Kronecker.

\begin{proof}[Proof of \cref{prop:non-root-intro}]
  Write $q=2n$ with $n$ odd, so $K=\Q(\zeta_q)=\Q(\zeta_n)$ has ring of integers $\CO_K$.
  Fix $v\in(\Z/q\Z)^m$, set $W=\CW_f(v)\in\CO_K$ and $\xi=\CU_f(v)=q^{-m/2}W$.
  Then  
  \[
    \xi^2=q^{-m}W^2\in K.
  \]
  Take a prime $\fp$ in $\CO_K$ lying above $2$.
  Since $q\equiv2\pmod 4$, the prime $2$ is unramified in $K$, so $v_{\fp}(q)=1$ and therefore $v_{\fp}(\xi^2)=2v_{\fp}(W)-m$ is odd.
  This forces that $\xi^2$ not to be a root of unity, and hence neither is $\xi$.
\end{proof}

For even $m$ the conclusion may fail.
The functions $g_{q,2k}$ defined in \cref{eq:g-q-2k} have root-of-unity coefficients.
So the non-root-of-unity property of $f_{q,m}$ for even $m$ is not automatic and carries independent information.

\section{Construction and proof}
\label{sec:construction}

In this section, let $m=dr+2k$ where $d\geq 2$, $r\geq 1$ and $k\ge 0$ are all integers.
Let $n=2^d-1$ and $q=2n$.

For $s=(s_1,\dots,s_d)\in \{0,1\}^d$, if $s\ne (0,\dots,0)$,  define
\begin{equation}\label{eq:kappa-def}
  \kappa(s)=\sum_{i=1}^d2^{i-1}s_i-1\in\{0,\dots,n-1\}.
\end{equation}
Clearly, the map
\begin{equation}\label{eq:kappa-bijection}
  \kappa:\F_2^d\setminus\{(0,\dots,0)\}\longrightarrow\Z/n\Z
\end{equation}
is a bijection by $n=2^d-1$.
We fix the Chinese remainder isomorphism
\begin{equation}\label{eq:crt}
  (\Z/q\Z)^d\stackrel{\sim}{\longrightarrow}\F_2^d\times(\Z/n\Z)^d,
  \qquad x\longmapsto(s,y), 
\end{equation}
together with the injection $\Z/n\Z\hookrightarrow\Z/q\Z$ given by multiplication by $2$.

Now we define the function $f_{q,d}$, which will be the core of the function $f_{q,m}$ of \cref{thm:construction-intro}.
\begin{definition}
  \label{def:block-function}
  For $(s,y)\in\F_2^d\times(\Z/n\Z)^d$, define
  \[
    b(s,y)=\begin{cases}
    \sum\limits_{i=1}^d y_i^2, &s=(0,\dots,0), \\[3mm]
    \sum\limits_{i=1}^{d-1}y_i^2+\kappa(s)y_d-\dfrac{\kappa(s)^2}{4}, &s\ne (0,\dots,0).
  \end{cases}
  \]
  Define $f_{q,d}:(\Z/q\Z)^d\to\Z/q\Z$ by
  \begin{equation}\label{eq:construction-f}
    f_{q,d}(x)=2b(s,y)\pmod{q},
  \end{equation}
  where $(s,y)$ is the image of $x$ under \cref{eq:crt}.
\end{definition}

Here $4^{-1}=2^{d-2}\in(\Z/n\Z)^\times$, so that $\kappa(s)^2/4$ in \cref{def:block-function} is unambiguous.

The Gauss sum will be useful in our proof.
Put 
\[
  \psi(t)=\zeta_n^t, \qquad
  G_n=\sum_{t\in\Z/n\Z}\psi(t^2).
\]
By \cite[Theorem~1.3.4]{BerndtEvansWilliams1998}, we have $G_n=i\sqrt{n}$ for any $n\equiv3\mod 4$.

\begin{lemma}
  For any $u\in\Z/n\Z$,
  \begin{equation}\label{eq:gauss-shift}
    \sum_{t\in\Z/n\Z}\psi(t^2-ut)
    =G_n\psi\Bigl(-\frac{u^2}{4}\Bigr).
  \end{equation}
\end{lemma}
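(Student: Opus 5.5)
The plan is to complete the square in the exponent, using that $2$ (and hence $4$) is invertible modulo $n$ because $n=2^d-1$ is odd. Write $2^{-1}$ for the inverse of $2$ in $\Z/n\Z$; explicitly $2^{-1}=2^{d-1}$, consistent with $4^{-1}=2^{d-2}$ noted above. For $t,u\in\Z/n\Z$ we have the identity
\[
  t^2-ut=\Bigl(t-\frac{u}{2}\Bigr)^2-\frac{u^2}{4}
\]
in $\Z/n\Z$. This is verified by expanding: $(t-u/2)^2=t^2-ut+u^2/4$, where $2\cdot 2^{-1}=1$ and $(2^{-1})^2=4^{-1}$.

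Next we use that $\psi$ is a character of the additive group $\Z/n\Z$, so $\psi(a+b)=\psi(a)\psi(b)$, and that $\psi$ is well defined on $\Z/n\Z$ since $\zeta_n^n=1$. This gives
\[
  \sum_{t\in\Z/n\Z}\psi(t^2-ut)=\psi\Bigl(-\frac{u^2}{4}\Bigr)\sum_{t\in\Z/n\Z}\psi\Bigl(\Bigl(t-\frac{u}{2}\Bigr)^2\Bigr).
\]
The substitution $t'=t-u/2$ is a bijection of $\Z/n\Z$ (a translation), so the remaining sum equals $\sum_{t'}\psi(t'^2)=G_n$. This yields \eqref{eq:gauss-shift}.

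No real obstacle arises. The only point needing care is that the fractions $u/2$ and $u^2/4$ are interpreted in $\Z/n\Z$ via the inverses of $2$ and $4$, which exist because $n$ is odd. The value $G_n=i\sqrt n$ is not needed for this lemma.
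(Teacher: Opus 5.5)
Your proposal is correct and is exactly the paper's argument: the paper's proof is the single line ``complete the square,'' and you carry this out in $\Z/n\Z$ using $2^{-1}=2^{d-1}$, the multiplicativity of $\psi$, and a translation of the summation variable. You are also right that the value $G_n=i\sqrt n$ plays no role in this lemma.
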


\begin{proof}
  Conclude the equation by completing the square.
\end{proof}

\begin{theorem}\label{thm:minimal}
  For each $v\in (\Z/q\Z)^d$, $|\CU_{f_{q,d}}(v)|=1$.
  More precisely, $\CU_{f_{q,d}}(v)$ is equal to a $4n$-th root of unity times $\alpha^{\pm1}$ where $\alpha=2^{-d/2}(1-i\sqrt n)$.
  If moreover $d\geq 3$, then $\CU_{f_{q,d}}(v)$ is not a root of unity.
\end{theorem}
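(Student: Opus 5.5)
The plan is to compute $\CW_{f_{q,d}}(v)$ in closed form by splitting the sum over $x$ along the Chinese remainder isomorphism \cref{eq:crt}.

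\textbf{Splitting the character.} Since $f_{q,d}(x)=2b(s,y)$, we have $\zeta_q^{f_{q,d}(x)}=\zeta_n^{b(s,y)}=\psi(b(s,y))$. The character $x\mapsto\zeta_q^{-v\cdot x}$ of $(\Z/q\Z)^d$ also factors through \cref{eq:crt}: there are $a\in\F_2^d$ and $c\in(\Z/n\Z)^d$, depending only on $v$, such that $\zeta_q^{-v\cdot x}=(-1)^{a\cdot s}\psi(-c\cdot y)$. Hence
\[
  \CW_{f_{q,d}}(v)=\sum_{s\in\F_2^d}(-1)^{a\cdot s}\sum_{y\in(\Z/n\Z)^d}\psi\bigl(b(s,y)-c\cdot y\bigr).
\]

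\textbf{Evaluating the two kinds of terms.}
\begin{itemize}
  \item For $s=0$, the inner sum is a product of $d$ shifted Gauss sums. By \cref{eq:gauss-shift} it equals $G_n^d\,\psi\bigl(-\sum_i c_i^2/4\bigr)$.
  \item For $s\neq0$, the coordinates $y_1,\dots,y_{d-1}$ again give $G_n^{d-1}\psi\bigl(-\sum_{i<d}c_i^2/4\bigr)$. The sum over $y_d$ is $\sum_{y_d}\psi\bigl((\kappa(s)-c_d)y_d\bigr)$, which equals $n$ if $\kappa(s)=c_d$ and $0$ otherwise. By the bijectivity \cref{eq:kappa-bijection}, exactly one $s_0\neq0$ survives. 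Its extra factor is $\psi(-\kappa(s_0)^2/4)=\psi(-c_d^2/4)$.
\end{itemize}
Adding the two contributions gives
\[
  \CW_{f_{q,d}}(v)=G_n^{d-1}\,\psi\Bigl(-\sum_{i=1}^d\frac{c_i^2}{4}\Bigr)\bigl(G_n+\varepsilon n\bigr),\qquad \varepsilon=(-1)^{a\cdot s_0}.
\]

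\textbf{Modulus and the shape $\alpha^{\pm1}$.} Using $G_n=i\sqrt n$, we get $G_n+\varepsilon n=\sqrt n\,(i+\varepsilon\sqrt n)$. Its modulus is $\sqrt n\sqrt{n+1}=\sqrt n\,2^{d/2}$. Therefore $|\CW_{f_{q,d}}(v)|=n^{d/2}2^{d/2}=q^{d/2}$, which gives $|\CU_{f_{q,d}}(v)|=1$.

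For the precise shape, note that $i+\sqrt n=i(1-i\sqrt n)=i\,2^{d/2}\alpha$. Also $i-\sqrt n=i(1+i\sqrt n)=i\,2^{d/2}\bar\alpha=i\,2^{d/2}\alpha^{-1}$, using $|\alpha|=1$. The remaining factors $i^{d-1}$, $\psi(\cdot)$, $\pm1$ and $i$ are all $4n$-th roots of unity. So $\CU_{f_{q,d}}(v)=\omega\,\alpha^{\pm1}$ with $\omega^{4n}=1$.

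\textbf{Non-root-of-unity for $d\ge3$.} Since $\omega$ is a root of unity, it suffices to show that $\alpha$ is not a root of unity; then neither is $\alpha^{-1}$, nor $\omega\alpha^{\pm1}$. This is the only genuinely arithmetic step, and I expect it to be the main obstacle.

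Work in $L=\Q(\sqrt{-n})$. Set $\beta=(1-\sqrt{-n})/2$, which lies in $\CO_L$ because $n\equiv3\pmod4$. Its norm is $\beta\bar\beta=(1+n)/4=2^{d-2}$. Since $i\sqrt n=\sqrt{-n}$, we have $\alpha^2=4\beta^2/2^d=\beta^2/(\beta\bar\beta)=\beta/\bar\beta$.

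Suppose $\alpha$ were a root of unity. Then $\beta/\bar\beta$ would be a unit, so the ideals $(\beta)$ and $(\bar\beta)$ would coincide. But $\beta+\bar\beta=1$, so $(\beta)$ and $(\bar\beta)$ are coprime. Equal coprime ideals must both be $(1)$, so $\beta$ is a unit and $2^{d-2}=N(\beta)=1$. This means $d=2$, contradicting $d\ge3$.
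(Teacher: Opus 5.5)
Your proof is correct. Up through the closed formula $\CW_{f_{q,d}}(v)=G_n^{d-1}\psi\bigl(-\sum_i c_i^2/4\bigr)(G_n+\varepsilon n)$ and the modulus and $\alpha^{\pm1}$ computations, it follows the paper step for step: CRT splitting, Gauss sums for $s=0$ and for $y_1,\dots,y_{d-1}$, and orthogonality in $y_d$ isolating the unique $s_0=\kappa^{-1}(c_d)$. The paper makes your $(a,c)$ explicit as $t=v\bmod 2$ and $w=2^{d-1}v\bmod n$, using $\zeta_q^{-1}=-\psi(-2^{d-1})$. Those explicit formulas are used later for the counting in \cref{thm:main2}, but your existence argument from the factorization of characters is enough for this statement.

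The real difference is the non-root-of-unity step. The paper stays over $\Q$. There $\alpha^2$ is a non-real number on the unit circle with trace $2^{2-d}-2$, so its minimal polynomial is $X^2+(2-2^{2-d})X+1$. This polynomial is not integral for $d\ge3$, so $\alpha^2$ is not even an algebraic integer.

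You instead write $\alpha^2=\beta/\bar\beta$ in $\CO_L$ with $L=\Q(\sqrt{-n})$. From $\beta+\bar\beta=1$ and $\beta\bar\beta=2^{d-2}$ you conclude that $\beta/\bar\beta$ can be a unit only if $\beta$ itself is, which forces $d=2$.

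The paper's argument is shorter and uses nothing beyond the fact that a rational algebraic integer lies in $\Z$. Yours needs the small checks that $\beta\in\CO_L$ and that units of an imaginary quadratic field have norm $1$. In return it shows where the obstruction lives. For $d\ge3$, $(\beta)$ and $(\bar\beta)$ are coprime nontrivial ideals whose product is $(2)^{d-2}$. Hence $\alpha^2$ has nonzero valuation at a prime above $2$, which is the same $2$-adic mechanism as in the proof of \cref{prop:non-root-intro}. It also explains cleanly why $d=2$ is the exceptional case: there $\beta$ is a unit.
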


\begin{proof}
  We need to compute
  \[
    \CW_{f_{q,d}}(v)=q^{d/2} \CU_{f_{q,d}}(v)
  \]
  for each $v\in (\Z/q\Z)^d$.
  For $x,v \in (\Z/q\Z)^d$, write
  \begin{align*}
    s&=x\bmod2\in \F^d_2, &y&=x\bmod n\in (\Z/n\Z)^d,\\
    t&=v\bmod2\in \F^d_2, &w&=2^{d-1}v\bmod n\in (\Z/n\Z)^d.
  \end{align*}
  Since $\zeta_q^n=-1$ and $\zeta_q^{n+1}=\zeta_q^{2^d}$,
  \[
    \zeta_q^{-1}
    =-\zeta_q^{-2^d}
    =-\zeta_n^{-2^{d-1}}=(-1)\psi(-2^{d-1}),
  \]
  it follows that
  \[
    \CW_{f_{q,d}}(v)=\sum_{x\in(\Z/q\Z)^d}\zeta_q^{f_{q,d}(x)-v\cdot x}
    =\sum_{x\in(\Z/q\Z)^d}\zeta_n^{b(s,y)}\zeta_q^{-v\cdot x}=S(t,w),
  \]
  where
  \[
    S(t,w):=\sum_{s\in\F_2^d}\sum_{y\in(\Z/n\Z)^d}
    (-1)^{t\cdot s}\psi\bigl(b(s,y)-w\cdot y\bigr).
  \]
  Write
  \[
    A_s(w) = \sum_{y\in(\Z/n\Z)^d} \psi\bigl(b(s,y)-w\cdot y\bigr),
  \]
  so $S(t,w)= \sum_{s\in \F^d_2} (-1)^{t\cdot s}A_s(w)$.
  We will compute $A_s(w)$ for each $s \in\F^d_2$. 

  By \cref{eq:gauss-shift}, when $s=0$, we have 
  \[
    A_0(w)=G_n^d\Phi(w),
    \qquad \text{ where }
    \Phi(w)=\psi\left(-\frac{w_1^2+\dots+w_d^2}{4}\right);
  \]
  when $s\ne0$, we have
  \[
    A_s(w)=G_n^{d-1}
    \psi\left(-\frac{w_1^2+\dots+w_{d-1}^2+\kappa(s)^2}{4}\right)
    \sum_{y_d\in\Z/n\Z}\psi\bigl((\kappa(s)-w_d)y_d\bigr).
  \]
  The last sum is zero unless $\kappa(s)=w_d$ or equivalently $s=s_w:=\kappa^{-1}(w_d)$, in which case it is $n$. 
  Thus we have
  \begin{align*}
    S(t,w)&
    =\sum_{s\in \F^d_2}(-1)^{t\cdot s}A_s(w)
    =A_0(w)+(-1)^{t\cdot s_w}A_{s_w}(w)\\&
    =\Phi(w)\bigl(G_n^d+(-1)^{t\cdot s_w}nG_n^{d-1}\bigr)\\&
    =\Phi(w)i^dn^{d/2}\bigl(1-(-1)^{t\cdot s_w}i\sqrt n\bigr)
    =\Phi(w)i^dq^{d/2}\alpha^{(-1)^{t\cdot s_w}},
  \end{align*}
  where $G_n=i\sqrt{n}$.
  Therefore, we obtain that for each $v\in (\Z/q\Z)^d$, 
  \begin{equation}\label{eq:exact-coefficient}
    \CU_{f_{q,d}}(v)
    =q^{-d/2}\CW_{f_{q,d}}(v)
    =q^{-d/2}S(t,w)
    =\Phi(w)i^d\alpha^{(-1)^{t\cdot s_w}}.
  \end{equation}
  This clearly has absolute value $1$ since $\Phi(w)$ is an $n$-th root of unity whence $|\Phi(w)|=1$ and $n+1=2^d$. This proves the bentness of $f_{q,d}$.

  Assume that $d\geq 3$.
  Since $n=2^d-1\equiv 3\pmod 4$, $n$ is not a square, so $\alpha^2=2^{-d}(2-2^d-2i\sqrt n)$ is irrational.
  As $|\alpha^2|=1$, its minimal polynomial over $\Q$ is $X^2+(2-2^{2-d})X+1$, which is not integral for $d\geq 3$.
  Hence $\alpha^2$, and therefore $\alpha$, is not an
  algebraic integer, so $\alpha$ is not a root of unity.
  (For $d=2$ the conclusion genuinely fails: $\alpha=e^{-i\pi/3}$.)
\end{proof}

Denote $g_{q,2k}:(\Z/q\Z)^{2k}\to \Z/q\Z$ by
\begin{equation}\label{eq:g-q-2k}
  g_{q,2k}(x)=\sum_{j=1}^k x_jx_{k+j}.
\end{equation}
Then
\begin{equation}\label{eq:CU-g}
  \CU_{g_{q,2k}}(v)
  =\zeta_q^{-\sum_{j=1}^k v_jv_{k+j}}
\end{equation}
is a $4n$-th root of unity, and $g_{q,2k}$ is a bent function by \cite[Theorem~1]{KumarScholtzWelch1985}.

For any $m=dr+2k$, define $f_{q,m}:(\Z/q\Z)^m\to \Z/q\Z$ (depending on the choice of $r$) by
\[
  f_{q,m}(x)=\sum_{j=1}^r f_{q,d}(x^{(j)})+g_{q,2k}(x').
\]
where $x^{(j)}:=(x_{d(j-1)+1},\dots,x_{dj})$ for $1\le j\le r$ and
$x'=(x_{dr+1},\dots,x_m)$.
Here $g_{q,2k}$ disappears if $k=0$.
We shall prove the following result, which clearly implies \cref{thm:construction-intro}.

\begin{theorem}\label{thm:main2}
  For each $v\in (\Z/q\Z)^m$, $|\CU_{f_{q,m}}(v)|=1$.
  More precisely, set $\alpha=2^{-d/2}(1-i\sqrt n)$ and let
  \[
    a(v)=\#\bigl\{1\le j\le r : t^{(j)}\cdot s_{w^{(j)}}\ \text{is even}\bigr\}\in\{0,1,\dots,r\}
  \]
  where
  \[
    t^{(j)}=v^{(j)}\bmod 2
    \quad\text{and}\quad
    s_{w^{(j)}}=\kappa^{-1}(w_{dj})=\kappa^{-1}(2^{d-1}v_{dj}\bmod n)
  \]
  are notations in the proof of \cref{thm:minimal}.
  Then $\CU_{f_{q,m}}(v)$ equals a $4n$-th root of unity times $\alpha^{2a(v)-r}$, and $a(v)=a$ for exactly $\binom ra2^{-r}q^{m}$ of the $v\in(\Z/q\Z)^{m}$.

  If $d\ge3$, then $\CU_{f_{q,m}}(v)$ is a root of unity if and only if $r$ is even and $a(v)=r/2$; in particular, if $r$ is odd then no $\CU_{f_{q,m}}(v)$ is a root of unity.
\end{theorem}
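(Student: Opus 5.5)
The plan is to use that Fourier coefficients are multiplicative over direct sums. Since $f_{q,m}(x)=\sum_j f_{q,d}(x^{(j)})+g_{q,2k}(x')$ and $v\cdot x=\sum_j v^{(j)}\cdot x^{(j)}+v'\cdot x'$, the sum defining $\CW_{f_{q,m}}(v)$ factors as a product over blocks. After normalizing by $q^{-m/2}=\prod_j q^{-d/2}\cdot q^{-k}$, this gives
\[
  \CU_{f_{q,m}}(v)=\CU_{g_{q,2k}}(v')\prod_{j=1}^r\CU_{f_{q,d}}(v^{(j)}).
\]
We then substitute \cref{eq:exact-coefficient} for each block and \cref{eq:CU-g} for the last factor. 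Each block contributes $\Phi(w^{(j)})i^d\alpha^{\epsilon_j}$ with $\epsilon_j=(-1)^{t^{(j)}\cdot s_{w^{(j)}}}$. Here $\Phi(w^{(j)})i^d$ is a $4n$-th root of unity, and so is $\CU_{g_{q,2k}}(v')$. The exponent of $\alpha$ is $\sum_j\epsilon_j=a(v)-(r-a(v))=2a(v)-r$. Since every factor has modulus one, $|\CU_{f_{q,m}}(v)|=1$.

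For the counting statement, we show that for a single block exactly half of the $v^{(j)}\in(\Z/q\Z)^d$ have $t^{(j)}\cdot s_{w^{(j)}}$ even. Under the CRT, $v^{(j)}$ corresponds to the pair $(t^{(j)},v^{(j)}\bmod n)$. The vector $s_{w^{(j)}}$ depends only on $v_{dj}\bmod n$ and is always nonzero, because $\kappa^{-1}$ lands in $\F_2^d\setminus\{0\}$. So for each fixed $n$-part, the map $t\mapsto t\cdot s_{w}$ is a nonzero linear functional on $\F_2^d$, and exactly $2^{d-1}$ choices of $t$ make it even. The $r$ blocks vary independently, and $v'$ is unconstrained, so $a(v)=a$ for exactly $\binom ra 2^{-r}q^m$ vectors $v$.

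For the root-of-unity criterion with $d\ge3$, write $\CU_{f_{q,m}}(v)=\omega\alpha^{e}$ with $\omega$ a root of unity and $e=2a(v)-r$. If $e=0$, the coefficient is a root of unity. If $e\neq0$, we need $\alpha^{e}$ not to be a root of unity. This is the only step needing slightly more than the proof of \cref{thm:minimal}. If $\alpha^e$ were a root of unity, then some power $\alpha^{eN}$ would equal $1$, making $\alpha$ itself a root of unity, which contradicts \cref{thm:minimal}. Finally, $e=0$ occurs exactly when $r$ is even and $a(v)=r/2$, and never when $r$ is odd; this completes the statement.
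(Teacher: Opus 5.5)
Your proposal is correct and follows the paper's proof essentially step for step: you factor $\CU_{f_{q,m}}(v)$ over the blocks, substitute \eqref{eq:exact-coefficient} and \eqref{eq:CU-g}, count parities using that $s_{w^{(j)}}\neq 0$ is independent of $t^{(j)}$ under the CRT, and reduce the root-of-unity criterion to \cref{thm:minimal}. The only difference is cosmetic: you derive the block factorization directly from the product structure of the exponential sum, where the paper cites Kumar--Scholtz--Welch's Property~3.
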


\begin{proof}
  By \cite[Property~3]{KumarScholtzWelch1985}, $f_{q,m}$ is a generalized bent function.
  Moreover,
  \[
    \CU_{f_{q,m}}(v)
    =\prod_{j=1}^{r}\CU_{f_{q,d}}(v^{(j)})\CU_{g_{q,2k}}(v')
  \]
  the last factor being absent when $k=0$.
  By \cref{eq:exact-coefficient}, we have
  \[
    \CU_{f_{q,m}}(v)
    =\CU_{g_{q,2k}}(v')\prod_{j=1}^{r}\Phi(w^{(j)})i^d\alpha^{(-1)^{t^{(j)}\cdot s_{w^{(j)}}}}
    =\eta\alpha^{\sum_{j=1}^r(-1)^{t^{(j)}\cdot s_{w^{(j)}}}},
  \]
  where
  \[
    \eta=\CU_{g_{q,2k}}(v')i^{dr}\psi\Bigl(-\frac{w_1^2+\cdots+w_{dr}^2}4\Bigr)
  \]
  is a $4n$-th root of unity by \cref{eq:CU-g}.
  By the definition of $a=a(v)$, we have $\CU_{f_{q,m}}(v)=\eta \alpha^{2a-r}$.
  For a fixed $w^{(j)}$, we have $s_{w^{(j)}}\ne(0,\dots,0)$, so exactly half of the $t^{(j)}\in\F_2^{d}$ give each parity; the $r$ parities are therefore independent and equidistributed, which gives the stated frequency of each value of $a$.

  Finally let $d\ge3$.
  If $2a-r=0$, then $\CU_{f_{q,m}}(v)=\eta$ is a $4n$-th root of unity.
  If $2a-r\neq0$ and $\alpha^{2a-r}$ were a root of unity, then $\alpha$ itself would be a root of unity, contradicting \cref{thm:minimal}.
\end{proof}

\begin{corollary}
  Let $d\geq 2$ and $q=2(2^d-1)$.
  The matrix
  \[
    H=\bigl(\zeta_q^{f_{q,d}(x-z)}\bigr)_{x,z\in(\Z/q\Z)^d}
  \]
  is a $(\Z/q\Z)^d$-invariant matrix in $\bh(q^d,q)$. Its entries are in fact $n$-th roots of unity.
\end{corollary}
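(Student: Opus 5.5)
The corollary has two claims, and I will take them in turn.

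The first claim, that $H$ is a $(\Z/q\Z)^d$-invariant matrix in $\bh(q^d,q)$, follows directly from earlier results. \cref{thm:minimal} shows that $|\CU_{f_{q,d}}(v)|=1$ for every $v\in(\Z/q\Z)^d$. So $f_{q,d}$ is generalized bent in the sense of \cref{def:gbf}. \cref{prop:butson-correspondence} applied with $m=d$ then gives that $H$ is group-invariant and lies in $\bh(q^d,q)$.

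Group invariance can also be seen directly: the entry $H_{x,z}$ depends only on $x-z$. If one wants the Hadamard property without quoting the correspondence, there is a short check. For the inner product of rows $x$ and $x'$, substitute $u=x-z$. The row product becomes $\sum_u \zeta_q^{f(u)}\overline{\zeta_q^{f(u+x'-x)}}$, an autocorrelation of $\zeta_q^{f}$. By Fourier inversion this autocorrelation equals $q^{-d}\sum_v|\CW_f(v)|^2\zeta_q^{\pm v\cdot(x-x')}$. Since every $|\CW_f(v)|^2=q^d$, this is $q^d$ when $x=x'$ and $0$ otherwise, i.e.\ $HH^*=q^dI$.

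The second claim concerns the entries. By \eqref{eq:construction-f}, $f_{q,d}(x-z)=2b(s,y)$ is an even residue modulo $q=2n$. Hence
\[
  \zeta_q^{f_{q,d}(x-z)}=\zeta_q^{2b}=\zeta_n^{b}
\]
is an $n$-th root of unity.

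The only point needing care is that $b(s,y)$ is well defined in $\Z/n\Z$, and hence so is $2b$ modulo $q$ via the injection $\Z/n\Z\hookrightarrow\Z/q\Z$, $c\mapsto 2c$. This uses the remark already made that $4^{-1}=2^{d-2}$ in $\Z/n\Z$. With that, both claims follow immediately, and I do not expect any real obstacle.
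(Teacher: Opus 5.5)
Your proposal is correct and matches the paper's proof. Bentness of $f_{q,d}$ (from \cref{thm:minimal}) combined with \cref{prop:butson-correspondence} gives the group-invariant Butson property, and $f_{q,d}\equiv 2b(s,y)\pmod q$ gives $\zeta_q^{f_{q,d}}=\zeta_n^{b}$; your optional autocorrelation check just proves the cited correspondence inline, and it is also correct.
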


\begin{proof}
  The Butson property follows from \cref{prop:butson-correspondence}.
  Since $f_{q,d} \equiv 2b(s,y) \bmod {q}$ by definition, $\zeta_q^{f_{q,d}(x)}=\zeta_n^{b(s,y)}$ is an $n$-th root of unity.
\end{proof}

\section*{Acknowledgements}

J. Li is partially supported by Shandong Provincial Natural Science Foundation (ZR2025MS31) and by the Fundamental
Research Funds for the Central Universities.
S. Ying is partially supported by National Key R\&D Program of China (No. 2025YFA1017203).
S. Zhang is partially supported by State Key Laboratory of Cyberspace Security Defense (Grant No. 2025-MS-04).

\section*{Declaration of generative AI and AI-assisted technologies in the manuscript preparation process}

Some of the mathematical work presented in this paper was carried out with assistance from Eureka and subsequently verified by the authors.
Eureka is a multi-agent system developed by JIUCHONG at the University of Science and Technology of China for mathematical research through human--AI interaction.
The authors assume full responsibility for the content of this paper.

\end{document}